\newtheorem*{theorem*}{Theorem}
\newtheorem{theorem}{Theorem}[section]
\newtheorem{corollary}[theorem]{Corollary}
\newtheorem{lemma}[theorem]{Lemma}
\theoremstyle{definition}
\newtheorem{definition}[theorem]{Definition}
\newtheorem{remark}[theorem]{Remark}
\newtheorem{example}[theorem]{Example}
\theoremstyle{plain}
\newcounter{theoremintro}
\newcounter{conjectureintro}
\newcounter{corollaryintro}
\newtheorem{theoremi}[theoremintro]{Theorem}
\newtheorem{conjecture}[conjectureintro]{Conjecture}
\newcommand{\cZ}{{\mathcal Z}}
\newcommand{\cS}{{\mathcal S}}
\newcommand{\cU}{{\mathcal U}}
\newcommand{\Nb}{{\mathbb N}}
\newcommand{\eps}{\varepsilon}
\newcommand{\abs}[1]{\left|#1\right|}
\DeclarePairedDelimiter\ceil{\lceil}{\rceil}
\DeclarePairedDelimiter\floor{\lfloor}{\rfloor}
\newcommand{\emb}{\hookrightarrow}
\newcommand{\act}[1][]{\overset{#1}{\curvearrowright}}
\newcommand{\Alt}{\mathrm{A}}
\newcommand{\Full}{\mathrm{F}}
\newcommand{\finsub}{\subset\!\subset}
\DeclareMathOperator{\Fix}{Fix}
\numberwithin{equation}{section}
\begin{document}

\title{Almost finiteness and groups of dynamical origin}

\author{Petr Naryshkin}
\address{Petr Naryshkin,
Alfréd Rényi Institute of Mathematics, Budapest, Reáltanoda utca 13-15, 1053, Hungary}
\email{pnaryshkin@renyi.hu}

\author{Spyridon Petrakos}
\address{Spyridon Petrakos,
Mathematisches Institut,
WWU M{\"u}nster, 
Einsteinstr.\ 62, 
48149 M{\"u}nster, Germany}
\email{spetrako@uni-muenster.de}

\begin{abstract}
We introduce the property of having good subgroups for actions of countable discrete groups on compact metrizable spaces, and show that it implies comparison when the acting group is amenable. As a consequence, free actions on finite-dimensional spaces of many notable amenable groups of dynamical origin are almost finite. For instance, this applies to topological full groups of Cantor minimal systems and the Basilica group. In particular, minimal such actions give rise to classifiable crossed products.
\end{abstract}

\date{\today}

\maketitle

\section{Introduction}
With the Elliott classification programme coming to a conclusion after about three decades of work by many people, establishing that simple, separable, unital, nuclear, $\cZ$-stable C$^*$-algebras in the UCT class are classified by the Elliott invariant, a lot of work has been done on determining which C$^*$-algebras satisfy these conditions (that is, which C$^*$-algebras are \emph{classifiable}). One particular natural class of examples that has drawn considerable attention in the last decade is that of crossed products arising from amenable topologically free minimal actions of countably infinite discrete groups on compact metrizable spaces. These assumptions on the action are equivalent to the crossed product C$^*$-algebra being separable, unital, nuclear, and simple, while they also automatically guarantee that it satisfies the UCT \cite{Tu99}. That only leaves the question of $\cZ$-stability.

When the acting group itself is amenable and the action is free, the modern approach to this problem relies on the property of almost finiteness (as well as the closely related notion of dynamical comparison). This property was originally introduced by Matui \cite{Mat12} for groupoids with a totally disconnected unit space. Later, Kerr \cite{Ker20} adapted it to the actions of amenable groups on general compact spaces and showed that it serves as a criterion for $\cZ$-stability of the associated crossed product. The central conjecture in the theory is that almost finiteness is automatic in the finite-dimensional case\footnote{There are actions of amenable groups on spaces with infinite covering dimension that do not have $\cZ$-stable crossed products \cite{GioKer10}, and therefore are not almost finite.}:

\begin{conjecture}
\label{conj: amen}
    All free actions of countably infinite amenable discrete groups on finite-dimensional compact metrizable spaces are almost finite.
\end{conjecture}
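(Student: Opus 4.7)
The plan is a two-stage reduction: first through dynamical comparison, then through the good-subgroups property introduced in this paper. By the theorem of Kerr \cite{Ker20}, a free action of a countably infinite amenable discrete group on a finite-dimensional compact metrizable space that has dynamical comparison is automatically almost finite. By the main result of the present paper, as advertised in the abstract, having good subgroups implies dynamical comparison whenever the acting group is amenable. Combining these, the conjecture reduces to the following purely group-dynamical statement: every free action of a countably infinite amenable discrete group on a finite-dimensional compact metrizable space admits good subgroups.

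To attack this reduced problem, I would proceed class by class along the natural hierarchy of countable amenable groups. For elementary amenable groups, the hope is to induct along the hierarchy, verifying that the good-subgroups property is inherited under extensions by amenable groups and under directed unions, with an F\o lner-tower or Ornstein--Weiss quasi-tiling argument supplying the base case of $\Zb^d$ or polycyclic groups. For non-elementary amenable groups of dynamical origin, class-specific arguments will be required; the abstract indicates that this paper already handles topological full groups of Cantor minimal systems and the Basilica group via structural features peculiar to those groups, and other self-similar or branch-type amenable groups would be the natural next targets.

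The main obstacle is precisely this last step: the uniform production of good subgroups. The class of countable amenable groups admits no uniform structural description --- it contains exotic examples such as Grigorchuk-type branch groups --- and the good-subgroups property is a genuinely dynamical condition depending on the action, not merely on the group. For this reason I do not expect a single uniform construction to cover all cases. My expectation is that the conjecture will be settled one class of groups at a time, with the good-subgroups framework of this paper serving as the principal bridge from group-theoretic or group-dynamical inputs to the almost-finiteness conclusion, and with the unresolved residue being those amenable groups --- if any --- whose actions resist every such structural decomposition.
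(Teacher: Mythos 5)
The statement you were asked to prove is Conjecture~\ref{conj: amen}, which is precisely that: a conjecture. The paper does not prove it and does not claim to; it only confirms it for amenable groups admitting faithful micro-supported actions (Theorem~\ref{thmIntro: amen}, Corollary~\ref{cor: amenTFG_AF}), via the chain micro-supported action $\Rightarrow$ abelian subgroups with the conjugation property (Lemma~\ref{lem:MicroSupAbelianEmbed}) $\Rightarrow$ good subgroups for free actions on zero-dimensional spaces $\Rightarrow$ comparison (Theorem~\ref{thm:amen_comp_from_g_s}) $\Rightarrow$ almost finiteness, and finally a bootstrap from zero-dimensional to finite-dimensional spaces via \cite[Theorem B]{KerSza20}. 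Your proposal is a research program, not a proof, and you say so yourself; as a proof of the conjecture it therefore has an unfillable gap at its core, namely the ``reduced problem'' that every free action of a countably infinite amenable group admits good subgroups. That statement is not known, is not claimed anywhere in the paper, and is in fact strictly harder to verify than it looks: Definition~\ref{def:good_Subgroups} demands that the restricted actions $\Lambda_1 \act X$ be almost finite and $\Lambda_2 \act X$ have comparison on multisets, so for a general subgroup one is back to (an instance of) the very conjecture being proved. The paper escapes this circularity only because Lemma~\ref{lem:MicroSupAbelianEmbed} produces \emph{abelian} subgroups, for which almost finiteness of free actions is already known, and because it first works over zero-dimensional spaces. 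Moreover, the conjugation condition $f\Lambda_1 f^{-1} \subset \Lambda_2$ for all $f$ in a prescribed finite set, with $|\Lambda_1| \ge N$ arbitrary, is a genuine structural constraint on $\Gamma$; there is no reason to expect every amenable group to satisfy it, which is exactly why the paper restricts to groups of dynamical origin.

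Two smaller corrections. First, the step ``comparison implies almost finiteness for free finite-dimensional actions'' is not a theorem of Kerr \cite{Ker20} alone: what is used is the Kerr--Szab{\'o} equivalence of almost finiteness with SBP plus comparison \cite{KerSza20}, together with the fact that the SBP is available in the relevant setting (in this paper it comes from Remark~\ref{rem: SBP extension}, i.e.\ from the good subgroups themselves, and the passage to finite dimensions is handled by \cite[Theorem B]{KerSza20} rather than by running the comparison argument directly on a finite-dimensional space). Second, your suggestion to handle elementary amenable groups by verifying permanence of the good-subgroups property under extensions and unions is not what is needed: Conjecture~\ref{conj: amen} is already known for all infinite elementary amenable groups (they lie in the class $\cS$ described in the introduction), and the whole point of the good-subgroups machinery is to reach groups \emph{outside} $\cS$. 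In short, your high-level reductions correctly mirror the paper's architecture, but the conjecture remains open, and the residue you identify at the end --- amenable groups whose actions resist every such structural decomposition --- is exactly the part that neither you nor the paper resolves.
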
 

After a series of advances \cite{DowZha19,KerSza20, ConJacMarSewTuc20, KerNar21, Nar22, Nar23}, the conjecture is now known to hold for the smallest class $\cS$ of infinite amenable groups such that
\begin{itemize}
    \item groups of subexponential growth belong to $\cS$,
    \item $\cS$ is closed under taking direct limits,
    \item if $H \in \cS$ and $H \lhd \Gamma$ then $\Gamma \in \cS$,
    \item if $\Gamma$ has finite normal subgroups of arbitrarily large cardinality then $\Gamma \in \cS$.
\end{itemize}

It is not easy to find amenable groups that do not belong to this class. For instance, all infinite elementary amenable groups are contained in $\cS$, and the famous question of Day from the 1950s \cite{Day57}, asking whether every amenable group is elementary amenable, was already difficult enough. It was answered negatively in 1985 in the celebrated work of Grigorchuk \cite{Gri85}, where he proved that the group he constructed in \cite{Gri80} has subexponential growth and is thus non-elementary amenable. Of course, this particular group still lies in the class $\cS$. However, his methods inspired a number of constructions and the development of the theory of automata groups. The latter, alongside groups whose origin lies in ergodic theory but have more recently drawn considerable interest from the point of view of geometric group theory and operator algebras, can collectively be referred to as \emph{groups of dynamical origin}. To the best of our knowledge, this is the only source of examples of non-elementary amenable groups.

We mention here two particular such examples. One is the topological full group construction, which was used to produce uncountably many pairwise non-isomorphic finitely generated simple amenable groups of exponential growth, through the celebrated work of Juschenko and Monod \cite{JusMon13} in conjunction with previous work of Matui \cite{Mat06, Mat13}; see \cite{dCor14} for a survey of the topic. This combination of properties ensures that these groups are not in the class $\cS$. The other is the Basilica group \cite{GriZuk02} and its generalizations. Although it is not entirely clear whether they belong to $\cS$, these groups are known not to be elementary subexponentially amenable \cite{BarVir05,DDF-ANT22}, that is, they cannot be constructed from groups of subexponential growth using elementary operations.

The initial motivation for the present paper was the question of whether the examples above satisfy Conjecture \ref{conj: amen}. As it turns out, the answer is positive, and, in fact, the techniques are applicable in a quite general setting. More precisely, the main result is as follows. We refer to Section \ref{sec: prelim} for the relevant terminology.

\begin{theoremi}
\label{thmIntro: amen}
Suppose that $\Gamma$ is an amenable group that admits a faithful micro-supported (Definition \ref{def:microsup}) action on a Hausdorff Baire space with no isolated points\footnote{this assumption can be slightly weakened, see Remark \ref{rem: microsup weakening}}. Then any free action of $\Gamma$ on a finite-dimensional space $X$ is almost finite.
\end{theoremi}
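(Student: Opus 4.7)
The strategy is to reduce Theorem~A to the paper's central technical result, advertised in the abstract: actions of amenable groups possessing good subgroups have dynamical comparison. Once comparison is in hand, the almost finiteness conclusion follows by the standard implication, established by Kerr \cite{Ker20} and further developed in \cite{KerSza20}, that free actions of amenable groups with comparison on finite-dimensional compact metrizable spaces are almost finite. Hence the real content of the proof is to verify, from only the auxiliary data of the faithful micro-supported action $\Gamma \curvearrowright Y$, that every free action $\Gamma \curvearrowright X$ on a finite-dimensional $X$ has good subgroups.

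To produce good subgroups, I would leverage the combinatorial flexibility of the micro-supported action on $Y$. Since $Y$ is Hausdorff, Baire and has no isolated points, for every nonempty open $V \subseteq Y$ and every $n \in \Nb$ one can, by iterating micro-supportedness together with the classical moving-set trick, find $n$ pairwise disjoint homeomorphic small open subsets of $V$ and elements of $\Gamma$ realising the $3$-cycles between them. These generate an embedded copy of $\Alt(n)$ in $\Gamma$ whose action on $Y$ is supported in $V$, and faithfulness on $Y$ ensures this is a genuine subgroup embedding; freeness on $X$ then forces every nontrivial element of this subgroup to act nontrivially on $X$. Pairing such $\Alt(n)$-subgroups of $\Gamma$ with sufficiently invariant Rokhlin-type tower partitions of $X$ coming from amenability of $\Gamma$ should match the smallness and balance conditions expected in the paper's definition of good subgroups, provided $n$ is chosen sufficiently large relative to the tolerance parameters in that definition.

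The main obstacle will be precisely this last matching step. The auxiliary space $Y$ carries no direct geometric information about $X$, so the entire transfer must be routed through the group-theoretic structure of $\Gamma$ and the freeness of the action on $X$, and the quantitative control needs to be uniform across Følner sets, open covers, and tolerance parameters appearing in the definition of good subgroups. Once this uniform matching is accomplished, the theorem follows by successively invoking the paper's comparison theorem and the known comparison-to-almost-finiteness implication in the finite-dimensional setting.
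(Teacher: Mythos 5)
Your high-level plan --- establish the good-subgroups property for the given action, invoke the paper's comparison theorem, and then pass to almost finiteness --- matches the architecture of the paper. However, the core step, namely the extraction of subgroups of $\Gamma$ from the micro-supported action on $Y$, contains a genuine gap. You propose to find $n$ disjoint open subsets of a small open set $V\subseteq Y$ together with elements of $\Gamma$ ``realising the $3$-cycles between them'', thereby embedding $\Alt(n)$ into $\Gamma$. Micro-supportedness does not provide such elements: it only guarantees that each rigid stabilizer $\Gamma_U$ is non-trivial, i.e.\ that there is \emph{some} non-trivial element supported in $U$. There is no reason a general micro-supported group should contain homeomorphisms permuting prescribed disjoint open sets in a prescribed pattern (weakly branch groups, one of the paper's two target classes, typically do not come with such ``piecewise'' permutations). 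The paper's Lemma \ref{lem:MicroSupAbelianEmbed} instead builds \emph{abelian} subgroups: it takes one non-trivial element $t_j\in\Gamma_{V_j}$ in each of $N$ disjoint open sets $V_j\subset V$; disjointness of supports forces these to commute, yielding $H_1=\langle t_1,\dots,t_N\rangle$ of order at least $N$. Moreover, your proposal never addresses the conjugation condition in the definition of good subgroups (one needs a second subgroup $\Lambda_2$ with comparison on multisets containing $f\Lambda_1 f^{-1}$ for all $f$ in a given finite set $F$). The paper handles this with a Baire-category argument: one chooses the supports inside a neighbourhood $V$ of a suitably generic point on which any two elements of $F$ either agree or translate $V$ disjointly, so that all $F$-conjugates of the generators still pairwise commute and generate an abelian $H_2$. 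Your ``matching with Rokhlin-type tower partitions of $X$'' plays no role in verifying the definition, which concerns only the restricted actions $\Lambda_i\act X$ and the inclusion $f\Lambda_1f^{-1}\subset\Lambda_2$.

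A second, smaller issue is the endgame. You assert that comparison plus finite-dimensionality yields almost finiteness for free actions; the correct equivalence is almost finiteness $\Leftrightarrow$ comparison $+$ small boundary property, and the SBP is not free of charge on finite-dimensional spaces. The paper sidesteps this by first proving the result for \emph{zero-dimensional} $X$ --- where the SBP is automatic and, crucially, where free actions of the (possibly finite) abelian subgroups $H_1,H_2$ are almost finite and have comparison on multisets, so the good-subgroups hypothesis is actually verifiable --- and then invoking \cite[Theorem B]{KerSza20} to pass from the zero-dimensional to the finite-dimensional case. Note that if the subgroups produced happen to be finite (which the construction cannot rule out), their free actions on a non--totally-disconnected $X$ are \emph{not} almost finite, so attempting to verify good subgroups directly on a general finite-dimensional $X$ would fail; the reduction to the zero-dimensional case is therefore not a cosmetic choice but a necessary one.
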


This result is widely applicable and confirms Conjecture \ref{conj: amen} for \emph{amenable} groups in the following two classes (see Subsection \ref{ssec: microsup} for precise conditions):
\begin{itemize}
    \item many topological full groups of \'{e}tale groupoids and their notable subgroups, including those of Cantor minimal systems.
    \item all weakly branch groups, including the Basilica-type groups.
\end{itemize}
We remark that many other prominent groups of dynamical origin can be written as one of the above, including Thompson-like groups, the IET group, and certain iterated monodromy groups. Combined with the prior results, this covers a vast class of examples of discrete amenable groups.

We obtain the theorem above by sharpening the methods pioneered in \cite{Nar23}. More precisely, we introduce a technical condition of having \emph{good subgroups} (Definition \ref{def:good_Subgroups}) as a weakening of having a normal subgroup for which the restricted action is almost finite. For actions of amenable groups, we obtain the corresponding generalization of the main result in \cite{Nar23} --- if an action has good subgroups then it has comparison (Theorem \ref{thm:amen_comp_from_g_s}). This condition turns out to be automatic for groups that admit micro-supported actions (Lemma \ref{lem:MicroSupAbelianEmbed}).

The paper is structured as follows. In Section \ref{sec: prelim} we recall the basics on almost finiteness, comparison, and micro-supported actions. In Section \ref{sec: Comp_from_subgrps} we define having good subgroups and show how it can be used to deduce dynamical subequivalence, leading to the main results of the paper.

\medskip

\noindent{\it Acknowledgements.}
The authors are grateful to Julian Kranz for many helpful comments and suggestions. They are also deeply thankful to the anonymous reviewer for pointing them towards micro-supported actions, as well as numerous other suggested improvements. The research was partially funded by the Deutsche Forschungsgemeinschaft (DFG, German Research Foundation) under Germany’s Excellence Strategy – EXC 2044 – 390685587, Mathematics Münster – Dynamics – Geometry – Structure; the Deutsche Forschungsgemeinschaft (DFG, German Research Foundation) – Project-ID 427320536 – SFB 1442, and ERC Advanced Grant 834267 - AMAREC. It was also partially funded by Dynasnet European Research Council Synergy project -- grant number ERC-2018-SYG 810115.

\section{Preliminaries}
\label{sec: prelim}

Unless otherwise stated, throughout this paper $\Gamma\act[\alpha] X$ will be an action by homeomorphisms of a countably infinite discrete group on a compact metrizable space. We will denote by $M(X)$ (resp. $M^{\Gamma}(X)$) the set of all regular Borel (resp. $\Gamma$-invariant regular Borel) probability measures on $X$. The symbol $\finsub$ will be used to denote finite subsets.

\subsection{Amenability, almost finiteness, and comparison}

Let us now introduce the main tools used in proving classifiability of crossed products.

\begin{definition}
    Let $A,B$ be two subsets of $X$. We say that $A$ is \emph{(dynamically) subequivalent} to $B$ if for any closed set $C\subseteq A$ there exists a finite open cover $\mathcal{U}$ of $C$ and elements $\{s_U\}_{U\in\mathcal{U}}$ in $\Gamma$ such that the sets $\{s_UU\}_{U\in\mathcal{U}}$ are pairwise disjoint and contained in $B$. We denote that by $A\precsim_\Gamma B$ (or simply $A\precsim B$ whenever there is no ambiguity).
\end{definition}

\begin{definition}
    We define the \emph{type semigroup} of the action to be the abelian semigroup generated by symbols $\{[U]:U\subseteq X\text{ open}\}$ subject to the relations $[U\sqcup V]=[U]+[V]$ and $[U]=[sU]$ for all $s\in\Gamma$.
\end{definition}

One can check that $\precsim_\Gamma$ for open subsets extends to a preorder (i.e. a binary relation that is reflexive and transitive) on the type semigroup (\cite[Lemma 2.2]{Ma21}). We will keep the same notation for this extension. Additionally, we will frequently identify an open set $U$ with the corresponding element $[U]$ of the type semigroup.

Note that Ma \cite{Ma21} has defined and studied the \emph{generalized type semigroup}, which is a further quotient of the object defined above, and on which $\precsim$ is a partial order. However, as we only need transitivity of $\precsim$, the given definition is sufficient for our purposes. 

We now recall the standard terminology of the theory.

\begin{definition}
A pair $(S, V)$, where $S\finsub \Gamma$ and $V \subset X$ is open is called a \emph{tower} if the sets $\{sV\}_{s \in S}$ (called \emph{levels} of the tower) are pairwise disjoint. We say that $S$ is the \emph{shape} of the tower and $V$ is the \emph{base}. A finite collection $\{(S_i, V_i)\}_{i=1}^n$ of towers is called a \emph{castle} if all the distinct levels $sV_i, s \in S_i, i=1,\ldots, n$ are pairwise disjoint.
\end{definition}

\begin{definition}
Fix a metric on $X$ compatible with the topology. An action $\alpha$ is \emph{almost finite} if for all $F\finsub\Gamma$ and all $\eps>0$ there exists a castle $\{(S_i,V_i)\}_{i=1}^n$ such that 
\begin{enumerate}
    \item $S_i\finsub\Gamma$ is $(F,\eps)$-invariant (that is, $|S_i\triangle FS_i|<\eps|S_i|$) for all $i$,
    \item $\mathrm{diam}(sV_i) < \eps$ for every $i=1, \ldots, n$ and every $g \in S_i$,
    \item $X \setminus \bigsqcup_{i=1}^nS_iV_i \precsim_\Gamma \sum_{i=1}^n \lfloor\eps\abs{S_i}\rfloor [V_i]$.\footnote{Technically a slight abuse of notation. However, elements in the type semigroup should be thought of as multisets, and can thus be (dynamically) compared to any set.}
\end{enumerate}
\end{definition}

Closely related to almost finiteness is the notion of (dynamical) comparison, of which we give several useful variations.

\begin{definition}
\label{def:comp}
    We say that an action has \emph{comparison} if for any open sets $U,V\subseteq X$ we have
    \[
        \mu(U)<\mu(V)\ \ \forall\mu\in M^{\Gamma}(X)\implies U\precsim_{\Gamma} V.
    \]
    Similarly, an action has \emph{comparison on multisets} if for any elements $\sum_{i=1}^n[U_i],\sum_{j=1}^m[V_j]$ in the type semigroup we have
    \[
        \sum_{i=1}^n\mu(U_i)<\sum_{j=1}^m\mu(V_j)\ \ \forall\mu\in M^{\Gamma}(X)\implies\sum_{i=1}^n[U_i]\precsim_{\Gamma}\sum_{j=1}^m[V_j].
    \]
    Finally, the action has $m$-comparison if for any open sets $U,V\subseteq X$ we have 
     \[
        \mu(U)<\mu(V)\ \ \forall\mu\in M^{\Gamma}(X)\implies [U]\precsim_{\Gamma} m[V].
    \]
\end{definition}

We refer to \cite{Ma21} for a more detailed treatment of these notions, as well as a connection with the algebraic structure of the (generalized) type semigroup.

\begin{definition}
An action has the \emph{small boundary property} (SBP) if there exists a basis $\cU$ for the topology on $X$ such that
\[
\mu(\partial U) = 0 \ \ \forall \mu\in M^{\Gamma}(X), U \in \cU.
\]
\end{definition}

Although seemingly unrelated, a surprising connection between the SBP and almost finiteness was uncovered in \cite{KerSza20}, where it was shown that the following are equivalent for free actions of discrete amenable groups:
\begin{enumerate}
    \item almost finiteness,
    \item SBP and comparison, and
    \item SBP and comparison on multisets.\footnote{the last condition does not appear in the original work \cite{KerSza20} but is easily seen to be equivalent with the same proof.}
\end{enumerate}
We remark that (i) implies (ii) for non-free actions as well, but the converse fails. Note also that the SBP is automatically satisfied if $X$ is zero-dimensional.

\subsection{Micro-supported actions}
\label{ssec: microsup}

\begin{definition}\label{def:microsup}
    A faithful action of $\Gamma$ on a topological space $Y$ is \emph{micro-supported} if the rigid stabilizer $\Gamma_U\coloneqq\{s\in\Gamma:y\in Y \setminus U\implies sy=y\}$ is non-trivial for any open set $U\subseteq Y$.
\end{definition}

Most groups of dynamical origin are equipped with a natural action on some topological space (usually, a Cantor set). As it turns out, for many interesting classes of groups this action is in fact micro-supported.

\begin{example}
The following groups admit faithful micro-supported actions on a Hausdorff Baire space with no isolated points:
\begin{enumerate}
    \item weakly branch groups,
    \item groups $\Gamma$ such that $\Alt(\mathfrak{G})\leq \Gamma\leq\Full(\mathfrak{G})$, where $\mathfrak{G}$ is an essentially principal minimal {\'e}tale groupoid with an infinite totally disconnected unit space.
\end{enumerate}
We refer to \cite{Nek22} for the relevant definitions and more on micro-supported actions and their applications.
\end{example}

Having such an action implies the existence of subgroups with certain properties, as the next lemma shows.

\begin{lemma}\label{lem:MicroSupAbelianEmbed}
    Assume that $\Gamma$ admits a faithful micro-supported action on a Hausdorff Baire space $Y$ with no isolated points. Then for any $F\finsub\Gamma$ and any $N\in\Nb$ there exist embeddings $H_1,H_2\emb\Gamma$ of finitely generated abelian groups with $|H_1|\geq N$ and $sH_1s^{-1}\subseteq H_2$ for all $s\in F$.
\end{lemma}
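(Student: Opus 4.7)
The plan is to find pairwise disjoint open sets $U_1,\ldots,U_k\subseteq Y$, with $k:=\lceil\log_2 N\rceil$, such that any two members of the family $\{sU_i:s\in F,\,i=1,\ldots,k\}$ are either equal or disjoint. After enlarging $F$ to contain $e$ and picking any nontrivial $s_i\in\Gamma_{U_i}$ (non-empty by micro-support), set $H_1:=\langle s_1,\ldots,s_k\rangle$ and $H_2:=\langle ss_is^{-1}:s\in F,\,i=1,\ldots,k\rangle$. Because rigid stabilizers of disjoint open sets commute in $\Gamma$ and intersect trivially, $H_1$ will be the internal direct sum $\bigoplus_i\langle s_i\rangle$, abelian of order at least $2^k\geq N$; once the disjointness-or-equality property is secured, the same argument forces $H_2$ to be a finitely generated abelian subgroup containing $sH_1s^{-1}$ for every $s\in F$ by construction.

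The substantive step is producing the $U_i$. Set $G:=F^{-1}F\setminus\{e\}$. One would ideally pick a point $y\in Y$ with $gy\neq y$ for every $g\in G$ and shrink a neighborhood, but a micro-supported action can have nontrivial elements with non-empty open fixed sets, so such a $y$ need not exist. Instead, I first construct a non-empty open $V\subseteq Y$ such that for each $g\in G$, either $V\subseteq\Fix(g)$ or $\Fix(g)\cap V$ has empty interior in $V$: enumerate $G$ and, at each step, replace the current $V$ by a non-empty open subset of $\Fix(g)$ if one exists inside it, otherwise leave $V$ unchanged. Both conditions are easily seen to persist under further shrinking. Letting $G_0\subseteq G$ be the set of $g$ for which the first alternative holds on the final $V$, and $G_1:=G\setminus G_0$, each $V\setminus\Fix(g)$ with $g\in G_1$ is dense open in $V$, so the finite intersection $\bigcap_{g\in G_1}(V\setminus\Fix(g))$ is non-empty; fix $y$ in it.

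Define $\sim$ on $F$ by $s\sim s'$ iff $sy=s'y$, equivalently iff $s^{-1}s'\in G_0\cup\{e\}$, and let $s^{(1)},\ldots,s^{(m)}$ be representatives of the classes. By Hausdorffness, pick pairwise disjoint open neighborhoods $B_1,\ldots,B_m$ of the distinct points $s^{(j)}y$; by continuity, $W:=V\cap\bigcap_{s\in F}s^{-1}(B_{[s]})$ is a non-empty open neighborhood of $y$ with $sW\subseteq B_{[s]}$ for every $s\in F$. Since $W$ has no isolated points, partition it into $k$ pairwise disjoint non-empty open subsets $U_1,\ldots,U_k$. For $s\sim s'$, the element $s^{-1}s'\in G_0$ acts trivially on $V$, hence on $U_i\subseteq V$; this yields both $sU_i=s'U_i$ and, since $s_i\in\Gamma_{U_i}$ is supported in $U_i$, the commutation $[s^{-1}s',s_i]=1$, so that $ss_is^{-1}=s's_i(s')^{-1}$. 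Hence, up to repetition, the generators of $H_2$ are the $mk$ elements $s^{(j)}s_i(s^{(j)})^{-1}$, supported in the pairwise disjoint open sets $\{s^{(j)}U_i\}_{j,i}$, and therefore $H_2$ is abelian. The main obstacle is the absence of a global free point for $F$, which forces one to accept $sW=s'W$ in place of $sW\cap s'W=\emptyset$ whenever $sy=s'y$; the equivalence-class construction is what makes this precise.
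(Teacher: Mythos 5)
Your proof is correct, and its endgame coincides with the paper's: produce an open set on which any two elements of $F$ either agree or have disjoint images, pick pairwise disjoint non-empty open subsets inside it, take nontrivial elements of their rigid stabilizers, and use that elements with disjoint supports commute to make $H_1$ and $H_2$ abelian. Where you genuinely diverge is in how that open set is found. The paper forms the dense $G_\delta$ set $Y_0=\bigcap_{s\in\Gamma}Y\setminus\partial(\Fix(s))$ --- this is exactly where the Baire hypothesis is used --- and picks $y_0\in Y_0$, exploiting that any group element fixing a point of $Y_0$ fixes a whole neighbourhood of it. You instead work only with the finite set $G=F^{-1}F\setminus\{e\}$: finitely many shrinkings arrange that each $g\in G$ either fixes $V$ pointwise or has fixed-point set with empty interior in $V$, and then $y$ is chosen in a \emph{finite} intersection of dense open subsets of $V$, which is non-empty without any appeal to the Baire category theorem. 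The concrete payoff is that your argument proves the lemma for faithful micro-supported actions on Hausdorff spaces with no isolated points, with the Baire assumption deleted; the paper's version is shorter and produces a single point $y_0$ that works simultaneously for all finite $F$, which is a convenience rather than a necessity here. Two cosmetic points: you cannot in general \emph{partition} $W$ into $k$ open sets (e.g.\ if $W$ is connected), but you only need $k$ pairwise disjoint non-empty open subsets, which Hausdorffness plus the absence of isolated points supplies; and the ``internal direct sum'' claim for $H_1$ deserves the one-line support argument showing $\langle s_i\rangle$ meets $\langle s_j: j\ne i\rangle$ trivially, though the paper is no more explicit on this point.
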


\begin{proof}
    Let $e\in F\finsub\Gamma$ and $N\in\Nb$. Let $s \in \Gamma$, let $\Fix(s)$ denote the set of its fixed points and consider the set $Y \setminus \partial(\Fix(s))$. It is clearly open and dense, hence the set
    \[
    Y_0 \coloneqq \bigcap_{s \in \Gamma} Y \setminus \partial(\Fix(s))
    \]
    is $G_\delta$ and dense, in particular non-empty. Note that if some element of $\Gamma$ fixes a point in $Y_0$, then it pointwise fixes a neighbourhood of it.

    Let $y_0\in Y_0$ and write $Fy_0=\{y_0,\dots,y_n\}$. For $i\in\{0,\dots,n\}$, define $F_i\coloneqq\{s\in F:sy_0=y_i\}$. Then the finite set $\cup_{i=0}^nF_i^{-1}F_i$ fixes $y_0$, and therefore it fixes an open neighbourhood $U$ of it. Next, we find a sufficiently small open set $V\subset U$ so that the sets $sV$ and $tV$ are pairwise disjoint whenever $s \in F_i$ and $t \in F_j$ for $i \ne j$. It follows that for any $s, t \in F$ either $sV\cap tV=\emptyset$ or $s|_V=t|_V$.
    
    Using Hausdorffness and the lack of isolated points, $V$ is infinite and we can pick $N$ non-empty disjoint open subsets $V_1,\dots,V_N$ of it. Since the action of $\Gamma$ on $Y$ is micro-supported, there exist non-trivial elements $t_j\in\Gamma_{V_j}$ for all $j\in\{1,\dots,N\}$. By construction, for any $i, j\in\{1,\dots,N\}$ and any $s, t \in F$, the elements $st_is^{-1}$ and $tt_jt^{-1}$ either coincide or have disjoint support (and hence commute). Thus, the groups $H_1\coloneqq\langle t_1,\dots,t_N\rangle$ and $H_2\coloneqq\langle\{st_js^{-1}:s\in F,j\in\{1,\dots N\}\}\rangle$ are both abelian which finishes the proof.
\end{proof}

\begin{remark}\label{rem: microsup weakening}
It can be seen from the proof that we may substitute the condition of the action being micro-supported with the following weaker property: for any $F\finsub\Gamma$ and any $N\in\Nb$ there exists an open set $U \subset Y$ such that $\Gamma_U$ contains an abelian subgroup of order at least $N$ and for any pair $s, t \in F$ either $\left.s\right|_U = \left.t\right|_U$ or the sets $sU$ and $tU$ are disjoint. 

This allows us to show that the conclusion of Lemma \ref{lem:MicroSupAbelianEmbed} holds under slightly more general circumstances. For instance, this is the case for groups $\Gamma$ such that $\Alt(\mathfrak{G})\leq \Gamma\leq\Full(\mathfrak{G})$, where $\mathfrak{G}$ is an {\'e}tale groupoid with an infinite totally disconnected unit space and either
\begin{itemize}
    \item there is a point $x\in\mathfrak{G}^{(0)}$ such that $\mathfrak{G}_{(x)}$ is trivial and $\mathfrak{G}x$ is infinite or
    \item the set of points $x\in\mathfrak{G}^{(0)}$ such that the orbit $\mathfrak{G}x$ is infinite is not meager.
\end{itemize}
In other words, the conditions of being essentially principal and minimal can be significantly relaxed.
\end{remark}

\section{Comparison from good subgroups and the main results}
\label{sec: Comp_from_subgrps}

\begin{definition}
\label{def:good_Subgroups}
Let $\Gamma \act X$ be an action of a countable discrete group on a compact metrizable space. We say that the action has \emph{good subgroups} if for any finite set $F \subset \Gamma$ and $N\in\mathbb{N}$ there are subgroups $\Lambda_1 \le \Lambda_2 \le \Gamma$ of cardinality at least $N$ such that 
\begin{enumerate}
    \item the action $\Lambda_1 \act X$ is almost finite,
    \item the action $\Lambda_2 \act X$ has comparison on multisets, and
    \item $f\Lambda_1f^{-1} \subset \Lambda_2$ for any $f \in F$.
\end{enumerate}
\end{definition}

\begin{remark}\label{rem: SBP extension}
Note that since $\Lambda_1 \act X$ is almost finite, it has the small boundary property, which passes to $\Gamma \act X$. This also holds in the case when $\Lambda_1$ is finite -- indeed, a free action of a finite group is almost finite if and only if the space $X$ is totally disconnected. In such a case any other action $\Gamma \act X$ trivially has the small boundary property. 
\end{remark}

We recall two lemmas which will be useful later. The first is a slight generalization of \cite[Lemma 3.3]{Ker20} and is proved in the exact same way. The second is immediate from the definition of the SBP.

\begin{lemma}\label{lem: eps-lemma}
Fix a compatible metric $\rho$ on $X$. Let $C \subset X$ be closed, let $B \subset X$ be open, and let $\Omega$ be a weak$^*$ closed subset of $M(X)$. Let $F_1, F_2$ be finite subsets of $\Gamma$. Suppose that for some $\delta
\in \mathbb{R}$ we have that
\[
\frac{1}{|F_1|}\sum_{g\in F_1}\mu(gC) + \delta < \frac{1}{|F_2|}\sum_{g\in F_2}\mu(gB)
\]
for every $\mu \in \Omega$. Then there is some $\eps > 0$ such that the sets
\[
C^\eps = \{x \in X \colon \rho(x, C) < \eps\} \quad \mbox{and} \quad B^{-\eps} = \{x \in X \colon \rho(x, X \setminus B) > \eps\}
\]
satisfy
\[
\frac{1}{|F_1|}\sum_{g\in F_1}\mu(gC^\eps) + \delta + \eps < \frac{1}{|F_2|}\sum_{g\in F_2}\mu(gB^{-\eps}).
\]
\end{lemma}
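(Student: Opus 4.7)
The plan is to upgrade the pointwise strict inequality on $\Omega$ to a uniform $\varepsilon$-perturbed version by combining weak$^*$ compactness of $\Omega$ with the standard semi-continuity facts that $\mu \mapsto \mu(U)$ is weak$^*$ lower semi-continuous for $U$ open and upper semi-continuous for $U$ closed.

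First, I would observe that
\[
\alpha(\mu) \coloneqq \frac{1}{|F_2|}\sum_{g\in F_2}\mu(gB) - \frac{1}{|F_1|}\sum_{g\in F_1}\mu(gC) - \delta
\]
is lower semi-continuous on $M(X)$ (each $gB$ is open and each $gC$ is closed) and, by assumption, strictly positive on $\Omega$, hence attains a strictly positive minimum on the weak$^*$ compact set $\Omega$.

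The key step is to preserve lower semi-continuity after the $\varepsilon$-regularization. I would do this by replacing $C^\varepsilon$ with its \emph{closure} $\overline{C^\varepsilon}$ and considering
\[
\tilde\alpha_\varepsilon(\mu) \coloneqq \frac{1}{|F_2|}\sum_{g\in F_2}\mu(gB^{-\varepsilon}) - \frac{1}{|F_1|}\sum_{g\in F_1}\mu(g\overline{C^\varepsilon}) - \delta - \varepsilon,
\]
which is still LSC in $\mu$ since each $g\overline{C^\varepsilon}$ is closed and each $gB^{-\varepsilon}$ is open. Because $C$ is closed we have $\bigcap_{\varepsilon > 0} g\overline{C^\varepsilon} = gC$, and because $B$ is open we have $\bigcup_{\varepsilon > 0} gB^{-\varepsilon} = gB$; monotone convergence of measures then yields $\tilde\alpha_\varepsilon(\mu) \to \alpha(\mu) > 0$ pointwise on $\Omega$ as $\varepsilon \searrow 0$.

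A compactness argument concludes the proof. The sublevel sets $\Omega_\varepsilon \coloneqq \{\mu \in \Omega : \tilde\alpha_\varepsilon(\mu) \le 0\}$ are weak$^*$ closed, nested (since $\tilde\alpha_\varepsilon$ is monotone non-increasing in $\varepsilon$), and have empty intersection by the pointwise convergence above; the finite intersection property on the compact set $\Omega$ thus forces some $\Omega_{\varepsilon_0}$ to be empty, giving $\tilde\alpha_{\varepsilon_0}(\mu) > 0$ for every $\mu \in \Omega$. The stated inequality with $C^{\varepsilon_0}$ in place of $\overline{C^{\varepsilon_0}}$ then follows from the trivial bound $\mu(gC^{\varepsilon_0}) \le \mu(g\overline{C^{\varepsilon_0}})$. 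The only non-routine point is the regularization choice, i.e., swapping $C^\varepsilon$ for its closure $\overline{C^\varepsilon}$: without this modification $\tilde\alpha_\varepsilon$ would be a difference of two LSC functionals and the compactness argument would break down, while the cost of this change is absorbed by the trivial inclusion at the very end.
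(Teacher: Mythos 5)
Your proof is correct and is essentially the argument the paper has in mind: the paper gives no proof of its own but defers to \cite[Lemma 3.3]{Ker20}, whose proof is exactly this combination of weak$^*$ compactness of $\Omega$, semi-continuity of $\mu \mapsto \mu(K)$ and $\mu \mapsto \mu(U)$ for closed and open sets, and the monotone limits $\overline{C^\eps} \downarrow C$, $B^{-\eps} \uparrow B$. Your packaging via sublevel sets and the finite intersection property, rather than extracting a convergent sequence of counterexample measures, is only a cosmetic difference.
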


\begin{lemma}\label{lem: SBP}
Suppose that $\Gamma \act X$ has the small boundary property and let $C \subset X$ be closed. Then for any $\eps > 0$ there is an open set $U$ such that $C \subset U \subset C^\eps$ and
\[
\mu(\partial U) = 0
\]
for any $\mu \in M^\Gamma(X)$.
\end{lemma}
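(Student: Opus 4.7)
The plan is to exploit the SBP basis $\mathcal{U}$ together with compactness of $C$. Fix $\eps > 0$, and let $\mathcal{U}$ be a basis for the topology such that every $V \in \mathcal{U}$ has $\mu(\partial V) = 0$ for all $\mu \in M^\Gamma(X)$.

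Since $\mathcal{U}$ is a basis for the topology on the metric space $X$, for each $x \in C$ I can choose $V_x \in \mathcal{U}$ with $x \in V_x$ and $V_x$ contained in the open $\eps$-ball around $x$; in particular $V_x \subset C^\eps$. The family $\{V_x\}_{x \in C}$ is an open cover of the compact set $C$, so it admits a finite subcover $V_{x_1}, \ldots, V_{x_n}$. Set
\[
U \coloneqq \bigcup_{i=1}^n V_{x_i}.
\]
By construction $C \subset U \subset C^\eps$.

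It remains to verify $\mu(\partial U) = 0$ for every $\mu \in M^\Gamma(X)$. This follows from the elementary topological fact that the boundary of a finite union of sets is contained in the union of their boundaries, so that
\[
\partial U \subset \bigcup_{i=1}^n \partial V_{x_i},
\]
and hence by subadditivity of $\mu$ together with $\mu(\partial V_{x_i}) = 0$ for each $i$, we conclude $\mu(\partial U) = 0$. There is no genuine obstacle here; the statement is essentially a packaging of the SBP into a form usable later, combining the basis property with compactness of $C$.
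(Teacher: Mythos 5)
Your proof is correct and is precisely the standard argument the paper has in mind when it declares this lemma ``immediate from the definition of the SBP'': cover the compact set $C$ by small basis elements with null boundaries, pass to a finite subcover, and use $\partial\bigl(\bigcup_{i=1}^n V_{x_i}\bigr) \subset \bigcup_{i=1}^n \partial V_{x_i}$ together with subadditivity. Nothing further is needed.
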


The following lemma is a generalization of \cite[Theorem A]{Nar23}, and the proof follows along the same lines.

\begin{lemma}
\label{ComparisonFromSubgroups}
Let $\Gamma \act X$ be an action with good subgroups, let $C \subset X$ be closed, and let $D \subset X$ be open. Suppose that there exist finite sets $F_1, F_2 \subset G$ such that for every $x \in X$
\begin{equation}
\label{AverageMeasureIneq}
\frac{1}{\abs{F_1}}\sum_{g\in F_1}\delta_x(gC)<\frac{1}{\abs{F_2}}\sum_{g\in F_2}\delta_x(gD),
\end{equation}
where $\delta_x$ is the Dirac measure at $x$. Then $C \precsim D$.
\end{lemma}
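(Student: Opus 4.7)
My plan is to upgrade the strict pointwise inequality on Dirac measures into a uniform-gap inequality on open sets, apply the good subgroups hypothesis, and then use a castle from the almost finiteness of $\Lambda_1$ together with the comparison on multisets of $\Lambda_2$ to assemble the subequivalence $C \precsim_\Gamma D$.

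First, the strict pointwise inequality (\ref{AverageMeasureIneq}) has a uniform gap: the quantities on either side are integer-valued in $x$, so after multiplying by $\abs{F_1}\abs{F_2}$ it reads $\abs{F_2}\abs{\{g \in F_1 : x \in gC\}} + 1 \leq \abs{F_1}\abs{\{g \in F_2 : x \in gD\}}$, and the original strict inequality therefore holds uniformly with gap $1/(\abs{F_1}\abs{F_2})$. Applying Lemma \ref{lem: eps-lemma} with $\Omega$ the weak$^*$ closed set of Dirac measures yields $\eps > 0$ and open sets $C^\eps \supset C$ and $D^{-\eps} \subset D$ for which the same averaging inequality persists pointwise with a positive gap. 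Next I invoke the good subgroups hypothesis with $F := F_1 \cup F_1^{-1} \cup F_2 \cup F_2^{-1}$ and a sufficiently large $N$ (to be determined) to obtain $\Lambda_1 \leq \Lambda_2$ with the three properties. Since $\Lambda_1 \act X$ is almost finite it has the small boundary property for $\Lambda_1$-invariant measures, hence for every $\mu \in M^{\Lambda_2}(X)$; Lemma \ref{lem: SBP} then provides an open set $U$ with $C \subset U \subset C^\eps$ and $\mu(\partial U) = 0$ for every such $\mu$. The averaging inequality continues to hold with $U$ in place of $C^\eps$.

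The heart of the argument is the construction of a castle $\{(S_i, V_i)\}_{i=1}^n$ via almost finiteness of $\Lambda_1 \act X$ with $S_i \finsub \Lambda_1$ very Følner in $\Lambda_1$ (with respect to a finite set depending on $F$, $U$, and $D^{-\eps}$) and levels of sufficiently small diameter that the intersection patterns of each level with the finite family $\{gU, gD^{-\eps}\}_{g \in F_1 \cup F_2}$ are, outside a set of arbitrarily small $\Lambda_2$-invariant measure, constant. On each level the pointwise inequality then becomes a Hall-type matching condition between the $F_1$-translates of $C$ and the $F_2$-translates of $D$ that cover the level; piecing these matchings together, and using the conjugation identity $f\Lambda_1f^{-1} \subset \Lambda_2$ to rewrite products $gs$ (for $g \in F$, $s \in S_i \subset \Lambda_1$) as $\lambda g$ with $\lambda \in \Lambda_2$, yields a subequivalence of most of $U$ into $D^{-\eps}$ by $\Gamma$-elements with pairwise disjoint images. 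The small residual $U \setminus \bigsqcup_i S_iV_i$ has arbitrarily small measure under every $\mu \in M^{\Lambda_2}(X)$, so comparison on multisets of $\Lambda_2$ absorbs it into a reserved open piece of $D^{-\eps}$, producing $C \subset U \precsim_\Gamma D^{-\eps} \subset D$ as required.

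The main obstacle I expect is the consistency of the Hall matching across castle levels together with the simultaneous global disjointness of the translated pieces in $D^{-\eps}$: the $F_1$- and $F_2$-translation patterns vary from level to level, and without the conjugation relation $F\Lambda_1 F^{-1} \subset \Lambda_2$ there is no mechanism linking the pointwise $\Gamma$-averaging to the $\Lambda_2$-invariant structure that drives the multiset comparison. Quantitatively reconciling these — choosing $N$, the Følner quality of $S_i$, and the level diameter compatibly so that all accumulated errors remain strictly below the uniform gap from the first step — will be the most delicate part of the proof.
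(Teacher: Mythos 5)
Your setup---extracting a uniform gap from integrality, invoking good subgroups with $F\supseteq F_1\cup F_2$, getting the SBP from almost finiteness of $\Lambda_1$, and taking a castle with shapes in $\Lambda_1$---matches the paper's opening moves, and the integrality observation is a legitimate shortcut past the compactness argument in Lemma \ref{lem: eps-lemma}. But the core of your argument, a ``Hall-type matching between the $F_1$-translates of $C$ and the $F_2$-translates of $D$'' carried out level by level and then ``pieced together'' with globally disjoint images in $D^{-\eps}$, is not an argument yet, and you yourself flag exactly this point (consistency of the matchings across levels plus simultaneous global disjointness) as an unresolved obstacle. The obstacle is real: the pointwise inequality \eqref{AverageMeasureIneq} provides no canonical assignment of levels of $A$ to levels of $B$, and nothing in your sketch forces the $\Gamma$-translates produced on different towers to land disjointly. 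Moreover, you assign comparison on multisets for $\Lambda_2$ only the minor role of absorbing the residual $X\setminus\bigsqcup_j S_jV_j$, which is a misallocation of the one hypothesis strong enough to resolve the disjointness problem.

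The paper's proof avoids the matching entirely by making comparison on multisets for $\Lambda_2$ do the \emph{main} work. After cutting the towers so that each level is either contained in or disjoint from an open $A\supset C$ and $B\subset D$ (this uses SBP boundary-cutting, not small diameters), one records the numbers $a_j,b_j$ of levels of tower $j$ inside $A$ and $B$ and forms the multisets $\sum_j\sum_{g\in F_1}\lceil a_j/|F_1|\rceil[gV_j]+[R^\epsilon]$ and $\sum_j\sum_{g\in F_2}\lfloor b_j/|F_2|\rfloor[gV_j]$. Pigeonhole within each tower gives $A\precsim_\Gamma$ the first and the second $\precsim_\Gamma B$; the conjugation relation $gS_jg^{-1}\subseteq\Lambda_2$ is used only to show that every $\nu\in M^{\Lambda_2}(X)$ assigns these multisets values within $\gamma/2$ of $\frac{1}{|F_1|}\sum_{g\in F_1}\nu(gA)$ and $\frac{1}{|F_2|}\sum_{g\in F_2}\nu(gB)$ respectively, since $\nu(gV_j)\le 1/|S_j|<\gamma/(4n)$. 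A single application of comparison on multisets for $\Lambda_2$ then links the two multisets, and transitivity of $\precsim_\Gamma$ yields $C\subset A\precsim_\Gamma B\subset D$. This reduction of the whole problem to a measure inequality between multisets of tower-base translates, settled wholesale by $\Lambda_2$, is the idea missing from your proposal; without it the matching and disjointness issues you identify do not resolve.
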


\begin{proof}
First of all, given any $\mu \in M(X)$, integrating \eqref{AverageMeasureIneq} yields
\[
\frac{1}{\abs{F_1}}\sum_{g\in F_1}\mu(gC)<\frac{1}{\abs{F_2}}\sum_{g\in F_2}\mu(gD).
\]
Applying Lemma \ref{lem: eps-lemma} shows that there exists some $\gamma > 0$ such that
\[
\frac{1}{\abs{F_1}}\sum_{g\in F_1}\mu(gC) + \gamma < \frac{1}{\abs{F_2}}\sum_{g\in F_2}\mu(gD).
\]
Set $n_i = |F_i|$ for $i=1,2$, and let $n=\max\{n_1,n_2\}$. Let $F =F_1\cup F_2\cup\{e\}$. Since $\Gamma\act X$ has good subgroups, we can find $\Lambda_1,\Lambda_2<\Gamma$, with $\Lambda_1\act X$ being almost finite and $\Lambda_2\act X$ having comparison on multisets, of sufficiently large cardinality such that
\[
\bigcup_{g\in F}g\Lambda_1g^{-1}\subseteq \Lambda_2,
\]
and a castle $(S_j, U_j)_{j=1}^m$ with $S_j\subseteq \Lambda_1$ such that $\min_{j=1,\dots,m}|S_j|>\frac{4n}{\gamma}$
and
\[
\mu\left(X\setminus\bigsqcup_{j=1}^mS_jU_j\right)<\frac{\gamma}{4}\ \forall\mu\in M^{\Lambda_1}(X).
\]

Since $\Lambda_1\act X$ is almost finite it has the small boundary property. Using Lemmas \ref{lem: eps-lemma} and \ref{lem: SBP}, we thus can find 
\begin{itemize}
    \item an open neighbourhood $A$ of $C$,
    \item an open subset $B \subset D$, and
    \item open subsets $V_j \subset U_j$ for $j = 1, 2, \ldots, m$
\end{itemize}
such that 
\begin{equation}\label{AvgMeasureIneq2}
\begin{aligned}
    \mu(\partial A) = \mu(\partial B) = \mu(\partial V_j) &= 0 \ \forall \mu \in M^{\Lambda_1}(X), j = 1, 2, \ldots, m \\
    \frac{1}{|F_1|}\sum_{g\in F_1}\mu(gA)+\gamma&<\frac{1}{|F_2|}\sum_{g\in F_2}\mu(gB)\ \forall\mu\in M(X) \\
    \mu\left(X\setminus\bigsqcup_{j=1}^mS_jV_j\right)&<\frac{\gamma}{4}\ \forall\mu\in M^{\Lambda_1}(X).
\end{aligned}
\end{equation}

Next, we can assume that all the levels of the castle $(S_j, V_j)_{j=1}^m$ are either contained in or disjoint from $A$ and $B$ by breaking the towers according to the intersection pattern\footnote{Take in each step the first tower that contains a level intersecting both $A$ and its complement, and the first such level, say $sV$. Replace the base of the tower with two new bases $V\cap s^{-1}A$ and $V\setminus\overline{s^{-1}A}$, and replace the whole tower by the two towers generated by the two new bases using the same shape. This process will end after a finite amount of steps, and in each step we are only throwing away parts of the boundaries. Repeat for $B$.} without changing the measure of the castle under any $\Lambda_1$-invariant measure. Denote by $a_j$, $b_j$ ($j=1,\dots,m$) the number of $V_j$-levels contained in $A$ and $B$, respectively.

Denoting by $R^\epsilon$ an $\epsilon$-neighbourhood of the remainder $R=X\setminus\bigsqcup_{j=1}^mS_jV_j$ such that $\mu(R^\epsilon)<\frac{\gamma}{4}$ for all $\mu\in M^{\Lambda_1}(X)$, we have
\[
A\precsim_\Gamma\sum_{i=1}^m\sum_{g\in F_1}\ceil{\frac{a_i}{n_1}}[gV_i]+[R^\epsilon].
\]
Moreover,
\[
\nu\left(\sum_{j=1}^m\sum_{g\in F_1}\ceil{\frac{a_j}{n_1}}[gV_j]+[R^\epsilon]\right)-\frac{1}{n_1}\sum_{g\in F_1}\nu(gA)\overset{(*)}{\le}\sum_{j=1}^m\sum_{g\in F_1}\left(\ceil{\frac{a_j}{n_1}}-\frac{a_j}{n_1}\right)\nu(gV_j)+\nu(R^\epsilon)
\]
for every $\nu\in M^{\Lambda_2}(X)$, where for $(*)$ we have used that for every $j=1,\dots,m$ and every $g\in F$ we have $gS_jV_j=\tilde{S_j}gV_j$, $\tilde{S_j}=gS_jg^{-1}\subseteq \Lambda_2$ by our choice of $\Lambda_2$. Furthermore, for all $g\in F_1$ and $\nu\in M^{\Lambda_2}(X)$ we have
\[
\sum_{j=1}^m\nu(gV_j)=\sum_{j=1}^m\frac{1}{|\tilde{S_j}|}\sum_{s\in\tilde{S_j}}\nu(sgV_j)\leq\frac{1}{\min|\tilde{S_j}|}<\frac{\gamma}{4n}
\]
and thus
\[
\nu\left(\sum_{j=1}^m\sum_{g\in F_1}\ceil{\frac{a_j}{n_1}}[gV_j]+[R^\epsilon]\right)-\frac{1}{n_1}\sum_{g\in F_1}\nu(gA)<\frac{\gamma}{2}.
\]
Similarly, we get
\[
\sum_{j=1}^m\sum_{g\in F_2}\floor{\frac{b_j}{n_2}}[gV_j]\precsim_\Gamma B
\]
and
\[
\frac{1}{n_2}\sum_{g\in F_2}\nu(gB)-\nu\left(\sum_{j=1}^m\sum_{g\in F_2}\floor{\frac{b_j}{n_2}}[gV_j]\right)<\frac{\gamma}{2}.
\]
Thus, by $\eqref{AvgMeasureIneq2}$ we obtain
\[
\nu\left(\sum_{j=1}^m\sum_{g\in F_1}\ceil{\frac{a_j}{n_1}}[gV_j] + [R^\epsilon]\right)<\nu\left(\sum_{j=1}^m\sum_{g\in F_2}\floor{\frac{b_j}{n_2}}[gV_j]\right)
\]
for all $\nu\in M^{\Lambda_2}(X)$ and so, by comparison on multisets for $\Lambda_2$, we have
\[
\sum_{j=1}^m\sum_{g\in F_1}\ceil{\frac{a_j}{n_1}}[gV_j] + [R^\epsilon]\precsim_{\Lambda_2}\sum_{j=1}^m\sum_{g\in F_2}\floor{\frac{b_j}{n_2}}[gV_j],
\]
which implies
\[
C \subset A\precsim_\Gamma B \subset D.
\]
\end{proof}

We are now ready to prove the main theorem.

\begin{theorem}\label{thm:amen_comp_from_g_s}
Let $\Gamma$ be amenable and suppose that an action $\Gamma \act X$ has good subgroups. Then $\Gamma \act X$ has comparison. In addition, if $\Gamma \act X$ is free then it is almost finite.
\end{theorem}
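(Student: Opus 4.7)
The plan is to establish comparison first by directly invoking Lemma~\ref{ComparisonFromSubgroups}, bridging the measure-theoretic hypothesis of comparison with the pointwise averaged hypothesis of that lemma via a Følner-averaging argument that uses amenability of $\Gamma$. The almost finiteness assertion will then follow from the Kerr--Szab\'o equivalence recalled at the end of Section~\ref{sec: prelim}.

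Concretely, for comparison I take open $U, V \subseteq X$ with $\mu(U) < \mu(V)$ for every $\mu \in M^\Gamma(X)$ and fix an arbitrary closed $C \subseteq U$; since subequivalence of open sets is determined by that of their closed subsets, it suffices to show $C \precsim V$. Note that $\mu(C) \le \mu(U) < \mu(V)$ for every invariant $\mu$. I then aim to produce a single finite set $F \finsub \Gamma$ with
\[
\sum_{g \in F}\delta_x(gC) < \sum_{g \in F}\delta_x(gV) \qquad \text{for every } x \in X,
\]
and to apply Lemma~\ref{ComparisonFromSubgroups} with $F_1 = F_2 = F$ to conclude $C \precsim V$.

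Producing such an $F$ is a standard empirical-measures argument. If no symmetric Følner set worked, then along a symmetric Følner sequence $(F_n)$ for $\Gamma$ one could find points $x_n \in X$ satisfying $\sum_{g \in F_n}\delta_{x_n}(gC) \ge \sum_{g \in F_n}\delta_{x_n}(gV)$. The empirical measures $\mu_n \coloneqq |F_n|^{-1}\sum_{g \in F_n}\delta_{g^{-1}x_n}$ then satisfy $\mu_n(C) \ge \mu_n(V)$, and any weak-$*$ cluster point $\mu$ is $\Gamma$-invariant by the Følner property. The Portmanteau theorem, applied to the closed set $C$ and the open set $V$, then yields
\[
\mu(C) \ge \limsup_n \mu_n(C) \ge \liminf_n \mu_n(V) \ge \mu(V),
\]
contradicting $\mu(C) < \mu(V)$.

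For the second assertion I would assume in addition that $\Gamma \act X$ is free. Remark~\ref{rem: SBP extension} records that having good subgroups already forces the small boundary property on $\Gamma \act X$; combining this with the comparison just established and with the equivalence quoted immediately before Section~\ref{ssec: microsup} (for free actions of discrete amenable groups, almost finiteness $\iff$ SBP and comparison) then delivers almost finiteness. I do not anticipate any serious obstacle: the heavy lifting has been packed into Lemma~\ref{ComparisonFromSubgroups} together with the two black-box facts imported from the preliminaries, so only the Følner averaging step requires a small, entirely standard argument.
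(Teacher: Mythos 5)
Your proposal is correct and follows essentially the same route as the paper: deduce comparison from Lemma~\ref{ComparisonFromSubgroups} by realizing the pointwise averaged inequality \eqref{AverageMeasureIneq} with $F_1=F_2$ a sufficiently invariant F{\o}lner set, then get almost finiteness from the SBP (Remark~\ref{rem: SBP extension}) together with the Kerr--Szab\'o equivalence. The only difference is that you spell out the empirical-measures/Portmanteau contradiction that the paper leaves implicit, which is exactly the intended standard argument.
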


\begin{proof}
    Comparison is immediate from Lemma \ref{ComparisonFromSubgroups}, since for any $C,D$ with
    \[
    \mu(C)<\mu(D)\ \forall\mu\in M^\Gamma(X)
    \]
    we can achieve \eqref{AverageMeasureIneq} by choosing $F_1=F_2$ to be a sufficiently large F{\o}lner set. The action has SBP by Remark \ref{rem: SBP extension} and therefore is almost finite if free.
\end{proof}

\begin{corollary}
\label{cor: amenTFG_AF}
Suppose that $\Gamma$ admits a faithful micro-supported action on a Hausdorff Baire space with no isolated points. Then any free action of $\Gamma$ on a finite-dimensional space $X$ is almost finite.
\end{corollary}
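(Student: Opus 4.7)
The plan is to verify the good-subgroups condition of Definition \ref{def:good_Subgroups} for $\Gamma \act X$ and then apply Theorem \ref{thm:amen_comp_from_g_s}. Since the conclusion is almost finiteness of a free action, $\Gamma$ must be amenable for it to make sense (matching Theorem \ref{thmIntro: amen}), so I will use amenability of $\Gamma$ without further comment.

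Fix $F \finsub \Gamma$ and $N \in \Nb$; enlarging $F$ if necessary, I assume $e \in F$. Applying Lemma \ref{lem:MicroSupAbelianEmbed} produces finitely generated abelian subgroups $H_1, H_2 \le \Gamma$ with $|H_1| \ge N$ and $sH_1s^{-1} \subseteq H_2$ for every $s \in F$; the case $s = e$ yields $H_1 \le H_2$. Since $\Gamma \act X$ is free, so are the restricted actions $H_1 \act X$ and $H_2 \act X$. These are free actions of finitely generated abelian groups (hence of polynomial growth, in fact virtually $\Zb^d$) on a finite-dimensional compact metrizable space, so by the Downarowicz--Zhang theorem \cite{DowZha19} both are almost finite. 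In particular, by the equivalence between almost finiteness and SBP plus comparison on multisets for free actions of amenable groups recalled in Section \ref{sec: prelim}, the action $H_2 \act X$ has comparison on multisets. Setting $\Lambda_1 \coloneqq H_1$ and $\Lambda_2 \coloneqq H_2$ then satisfies all three clauses of Definition \ref{def:good_Subgroups}, so Theorem \ref{thm:amen_comp_from_g_s} gives that $\Gamma \act X$ is almost finite.

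The main technical content of this corollary sits in the two ingredients rather than in the corollary itself: Lemma \ref{lem:MicroSupAbelianEmbed} converts the micro-supported hypothesis into a conjugation-closed pair of large abelian subgroups, and Theorem \ref{thm:amen_comp_from_g_s} (powered by Lemma \ref{ComparisonFromSubgroups}) does the heavy lifting of passing from good subgroups to comparison and almost finiteness. The one small point that requires care is to explicitly include the identity in $F$ before invoking Lemma \ref{lem:MicroSupAbelianEmbed}, since otherwise the required inclusion $\Lambda_1 \le \Lambda_2$ does not formally follow from the lemma's statement.
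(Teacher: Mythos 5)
Your overall strategy---verify Definition \ref{def:good_Subgroups} and feed it into Theorem \ref{thm:amen_comp_from_g_s}---is the right skeleton, and your point about adjoining $e$ to $F$ is well taken. But there is a genuine gap in the step where you verify clause (i) of the good-subgroups condition directly on the finite-dimensional space $X$. Lemma \ref{lem:MicroSupAbelianEmbed} only guarantees that $H_1$ is a finitely generated abelian group of \emph{cardinality} at least $N$; it may perfectly well be finite, and in the motivating examples it typically is: the generators $t_j$ are arbitrary nontrivial elements of rigid stabilizers, which in topological full groups and in weakly branch groups (Grigorchuk, Basilica-type) can all be torsion, giving $H_1\cong(\Zb/k)^N$. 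As Remark \ref{rem: SBP extension} records, a free action of a \emph{finite} group is almost finite if and only if the space is totally disconnected. Hence, when $X$ is finite-dimensional but not zero-dimensional and $H_1$ is finite, the restricted action $\Lambda_1 = H_1 \act X$ is \emph{not} almost finite, clause (i) fails, and the good-subgroups condition cannot be checked on $X$ itself. A secondary issue: even when $H_1,H_2$ are infinite, the Downarowicz--Zhang theorem you invoke is a statement about actions on zero-dimensional spaces; for almost finiteness of free abelian (or locally subexponential) actions on finite-dimensional spaces the appropriate reference would be \cite[Theorem C]{KerSza20} or similar.

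The paper circumvents exactly this obstruction by a two-step reduction. It first verifies good subgroups---and hence comparison and almost finiteness---for free actions of $\Gamma$ on \emph{zero-dimensional} spaces, where free actions of abelian groups, finite or infinite, are always almost finite and therefore have comparison on multisets. It then applies \cite[Theorem B]{KerSza20}, which upgrades ``every free action of $\Gamma$ on a zero-dimensional compact metrizable space is almost finite'' to the same statement for finite-dimensional spaces. Your argument is missing this reduction; without it (or some other device ensuring that $\Lambda_1 \act X$ is almost finite on the given $X$), the proof does not go through.
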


\begin{proof}
It is known that free actions of abelian groups on zero-dimensional spaces are almost finite and therefore have comparison (on multisets). Thus, Lemma \ref{lem:MicroSupAbelianEmbed} shows that any free action of $\Gamma$ on a zero-dimensional space has good subgroups. Theorem~\ref{thm:amen_comp_from_g_s} guarantees that these actions have comparison, and thus are almost finite. Applying \cite[Theorem B]{KerSza20} then finishes the proof.
\end{proof}

\begin{remark}
    The above gives another potential way to tackle the long-standing question of amenability for Thompson's group $F$ and for the IET group---they are amenable if and only if all their free actions on finite-dimensional spaces are almost finite.
\end{remark}

\end{document}